\documentclass[a4paper,12pt]{article}
\usepackage[cp1251]{inputenc}
\usepackage[english]{babel}
\usepackage[tbtags]{amsmath}
\usepackage{amsfonts,amssymb,mathrsfs,amscd}
\usepackage{color}
\usepackage{graphics}
\usepackage{wrapfig}
\usepackage{euscript}
\usepackage[dvips]{graphicx}

\date{}
\newtheorem{theorem}{Theorem}[section]
\newtheorem{lemma}{Lemma}[section]

\newtheorem{definition}{Definition}[section]

\begin{document}
\renewcommand{\thesection}{\arabic{section}}
\renewcommand{\theequation}{\thesection.\arabic{equation}}
\csname @addtoreset\endcsname{equation}{section}
\large

\begin{center}
\bf Construction of step scaling functions in the Vilenkin group
\end{center}

\centerline{\bf Sergei Lukomskii\footnote{Sergei Lukomskii:\ LukomskiiSF@info.sgu.ru}}

Affiliation: Department of Mathematics and Mechanics, Saratov State University, Saratov, Russian Federation,\\
 AMS subject classification : 42C40; 42C15; 43A70.

\begin{abstract}
In Vilenkin's group we present an algorithm for constructing a step scaling function with a given support and a constant on given cosets.   Bibliography: 20 titles.
\end{abstract}
\noindent
 Keywords:  zero-dimensional groups, Vilenkin groups, refinable function, trees.

\section{Introduction}We consider the problem of constructing a step refinable function in the Vilenkin group G. Refinable functions are the main tool in constructing MRA, frames  both in the classical case and in Vilenkin groups. They are also used in the definition of the subdivision operator. In applications, it is important to have step-wise refinable  functions with compact support. G.S. Berdnikov \cite{SLGB} proposed using N-valid trees $(N\in \mathbb N)$ to construct step refinable functions. The $N$-valid tree of height $H$  generates a refinable function constant on cosets $G_{H-2N+1}$ with compact support $K\subset G_{-N}$. (The notion of Vilenkin group and related concepts will be explained in details in Section 2)  However, constructing the required tree is not an obvious task.  Yu. Farkov, M. Skopina \cite{YuFMS}
obtained a necessary and sufficient condition for a step function to be refinable and for $N=2$ obtained a sufficient condition for a step function $m_0$ to be a mask. In this article, for $N\ge M\ge 1$ we propose a method for constructing a step refinable  function $\varphi$, for which
$\hat\varphi\in {\mathfrak D}_{G_{-N}^\bot}(G_M^\perp)$.

\section{Preliminaries}\label{s2}
We will consider the Vilenkin group as a  locally compact
zero-dimensional abelian group
 with additional condition $pg_n=0$. Therefore we start with some basic notions
  and facts related to analysis on zero-dimensional groups. One may find more information
   on the topic in \cite{AVDR},\cite{LS1},\cite{LS2}.

Let $(G,\dot + )$~be a~locally compact zero-dimensional Abelian
group with the topology generated by a~countable system of open
subgroups
$$
\cdots\supset G_{-n}\supset\cdots\supset G_{-1}\supset G_0\supset
G_1\supset\cdots\supset G_n\supset\cdots
$$
where
$$
\bigcup_{n=-\infty}^{+\infty}G_n= G,\quad
 \quad \bigcap_{n=-\infty}^{+\infty}G_n=\{0\},
$$
 $p$ is an order of quotient groups $G_n/G_{n+1}$ for all $n\in\mathbb Z$.
 We will always
assume that $p$ is a prime number. We will name such chain
as \it basic chain. \rm  In this case, a~base of the topology is
formed by all possible cosets~$G_n\dot + g$, $g\in G$.

Let~$\mu$ be a Haar measure on~$G$, we know that $\mu
G_n=\frac{1}{p^n}$. Further, let
$\smash[b]{\displaystyle\int_{G}f(x)\,d\mu(x)}$~be the absolutely
convergent integral of the measure~$\mu$.

Given $n\in\mathbb Z$, consider an element $g_n\in G_n\setminus
G_{n+1}$ and fix~it. Then any $x\in G$ has a~unique representation
in the form
\begin{equation}
\label{eq1.1} x=\sum_{n=-\infty}^{+\infty}a_ng_n, \qquad
a_n=\overline{0,p-1}.
\end{equation}
The sum~\eqref{eq1.1} contains finite number of terms with negative
subscripts, that~is,
\begin{equation}
\label{eq1.2} x=\sum_{n=m}^{+\infty}a_ng_n, \qquad
a_n=\overline{0,p-1}, \quad a_m\ne 0.
\end{equation}
We will name system $(g_n)_{n\in \mathbb Z}$ as {\it a basic
system}. The mapping $\lambda:G\to[0,+\infty)$ defined by the equality
$$\lambda(x)=\sum_{n=m}^{+\infty}a_np^{-n-1}$$
is called Monna mapping \cite{Mon}.

Classical examples of zero-dimensional groups are Vilenkin groups
and groups of $p$-adic numbers (see~\cite[Ch.~1, \S\,2]{AVDR}).
 The elements of the~Vilenkin group are infinite sequences
 $x=(x_k)_{k=-\infty}^{+\infty}$ such that:
 \begin{itemize}
 \item[1)] $x_k=\overline{0,p-1}$; \item[2)] only a~finite number
 of~$x_k$ with negative subscripts are different from zero;
 \item[3)] the group operation~$\dot+ $ is the coordinate-wise
 addition modulo~$p$, that~is,
 $$
 x\dot+ y=(x_k\dot+ y_k), \qquad x_k\dot+ y_k=(x_k+y_k)\ \
 \operatorname{mod}p.
 $$
 \end{itemize}
 A topology on such group is generated by the chain of subgroups
 $$
  G_n=\bigl\{x\in G:x=(\dots,0,0,\dots,0,x_n,x_{n+1},\dots),\
   x_\nu=\overline{0,p-1},\ \nu\ge n\bigr\}.
 $$
 It is evident that $\lambda(G_n)=[0,p^{-n}]$.
 The elements $g_n=(\dots,0,0,1,0,0,\dots)$ form a basic system.
 From definition of the operation $\dot+$ we have $pg_n=0$.
 Therefore we will name a zero-dimensional group $(G,\dot+)$ with the condition $pg_n=0$ as Vilenkin group.

By $X$ we denote  the collection
 of the characters of a~group $(G,\dot+ )$; it is
a~group with respect to multiplication, too. Also let
$G_n^\bot=\{\chi\in X:\forall\,x\in G_n\  , \chi(x)=1\}$ be the
annihilator of the group~$G_n$. Each annihilator~$ G_n^\bot$ is
a~group with respect to multiplication, and the subgroups~$
G_n^\bot$ form an~increa\-sing sequence
\begin{equation}
\label{eq1.3} \cdots\subset G_{-n}^\bot\subset\cdots\subset
G_0^\bot \subset G_1^\bot\subset\cdots\subset
G_n^\bot\subset\cdots
\end{equation}
with
$$
\bigcup_{n=-\infty}^{+\infty} G_n^\bot=X \quad\text{and} \quad
\bigcap_{n=-\infty}^{+\infty} G_n^\bot=\{1\},
$$
the quotient group $ G_{n+1}^\bot/ G_n^\bot$ having order~$p$.
The group of characters~$X$ is a zero-dimensional group with a
basic chain \eqref{eq1.3}. The group  may be supplied with the
topology using the chain of subgroups~\eqref{eq1.3}, the family of
the cosets $ G_n^\bot\cdot\chi$, $\chi\in X$, being taken
as~a~base of the topology. The collection of such cosets, along
with the empty set, forms the~semiring~${\mathscr X}$. Given
a~coset $ G_n^\bot\cdot\chi$, we define a~measure~$\nu$ on it by
$\nu( G_n^\bot\cdot\chi)=\nu( G_n^\bot)= p^n$. The measure~$\nu$ can be
extended onto the $\sigma$-algebra of measurable sets in the
standard way. One then forms the absolutely convergent integral
$\displaystyle\int_XF(\chi)\,d\nu(\chi)$ using this measure.

The value~$\chi(g)$ of the character~$\chi$ at an element $g\in G$
will be denoted by~$(\chi,g)$. The Fourier transform~$\widehat f$
of an~$f\in L_2( G)$  is defined~as follows
$$
\widehat f(\chi)=\int_{ G}f(x)\overline{(\chi,x)}\,d\mu(x)=
\lim_{n\to+\infty}\int_{ G_{-n}}f(x)\overline{(\chi,x)}\,d\mu(x),
$$
with the limit being in the norm of $L_2(X)$. For any~$f\in L_2(G)$,
the inversion formula is valid
$$
f(x)=\int_X\widehat f(\chi)(\chi,x)\,d\nu(\chi)
=\lim_{n\to+\infty}\int_{ G_n^\bot}\widehat
f(\chi)(\chi,x)\,d\nu(\chi);
$$
here the limit also signifies the convergence in the norm of~$L_2(
G)$. If $f,g\in L_2( G)$ then the Plancherel formula is valid
$$
\int_{ G}f(x)\overline{g(x)}\,d\mu(x)= \int_X\widehat
f(\chi)\overline{\widehat g(\chi)}\,d\nu(\chi).
$$
\goodbreak

Provided with this topology, the group of characters~$X$ is
a~zero-dimensional locally compact group; there is, however,
a~dual situation: every element $x\in G$ is a~character of the
group~$X$, and~$ G_n$ is the annihilator of the group~$ G_n^\bot$. In particular, Vilenkin group is isomorphic to its group of characters. Therefore, in some cases it is convenient to consider the properties of characters group as dual to the properties of Vilenkin group.

The union of disjoint sets $E_j$ we will denote by $\bigsqcup
E_j$.

 For any $n\in \mathbb Z$ we choose a character $r_n\in  G_{n+1}^{\bot}\backslash G_n^{\bot}$
 and fixed it. The collection of functions $(r_n)_{n\in \mathbb Z}$ is called a Rademacher system. Any character $\chi$ can be rewritten as a product
 $$\chi=\prod_{j=-m}^{+\infty} r_j^{\alpha_j},\ \alpha_j=\overline{0,p-1}.$$
 Then the Fine mapping $\lambda:X\to[0,+\infty)$ can be defined as
 $$\lambda(\chi)=\sum_{j=-\infty}^{m}\alpha_{j}p^{j}.$$
 It is evident that $\lambda(G^\bot_n)=[0,p^n]$.
  Let us denote
  $$
    H_0=\{h\in G: h=a_{-1}g_{-1}\dot+a_{-2}g_{-2}\dot+\dots \dot+ a_{-s}g_{-s}, s\in \mathbb
    N,\ a_j=\overline{0,p-1}\},
  $$
  $$
    H_0^{(s)}=\{h\in G: h=a_{-1}g_{-1}\dot+a_{-2}g_{-2}\dot+\dots \dot+
    a_{-s}g_{-s},\ a_j=\overline{0,p-1}
    \},s\in \mathbb N.
  $$
  Under the Fine mapping $\lambda(H_0)= \mathbb N_0=\mathbb N\bigsqcup \{0\}$ and $\lambda(H_0^{(s)})=\mathbb N_0\bigcap [0,p^{s-1}].$ Thus the set $H_0$ is an analog of the set $\mathbb N_0$.

 We can define the mapping ${\cal
 A}\colon G\to G$ by
 ${\cal A}x:=\sum_{n=-\infty}^{+\infty}a_ng_{n-1}$, where
 $x=\sum_{n=-\infty}^{+\infty}a_ng_n\in G$.  The mapping~${\cal A}$ is called
 a  dilation operator if~${\cal A}(x\dot+ y)={\cal A}x\dot + {\cal A}y$ for all
 $x,y\in G$. By definition, put $(\chi {\cal A},x)=(\chi, {\cal
 A}x)$.
It is also clear that
 ${\cal A} g_n= g_{n-1}, r_n{\cal A} = r_{n+1}$,\
 ${\cal A} G_n= G_{n-1}, G_n^\bot{\cal A}= G_{n+1}^\bot$.

  \begin{lemma}[\cite{LS2}]
 For any zero-dimensional group\\
 1) $\int\limits_{G_0^\bot}(\chi,x)\,d\nu(\chi)={\bf 1}_{G_0}(x)$,
 2) $\int\limits_{G_0}(\chi,x)\,d\mu(x)={\bf 1}_{G_0^\bot}(\chi)$.\\
 \end{lemma}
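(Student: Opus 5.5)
The plan is the standard orthogonality argument for characters on a compact open subgroup. I prove 2) and then obtain 1) by the duality noted above.

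\emph{Part 2), case $\chi\in G_0^\bot$.} By definition of the annihilator, $(\chi,x)=1$ for all $x\in G_0$. Hence
$$\int_{G_0}(\chi,x)\,d\mu(x)=\mu(G_0)=p^0=1.$$

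\emph{Part 2), case $\chi\notin G_0^\bot$.} There is some $x_0\in G_0$ with $(\chi,x_0)\ne 1$. Since $G_0$ is a subgroup, the map $x\mapsto x\dot+x_0$ is a bijection of $G_0$ onto itself. The Haar measure is translation invariant, so with $I=\int_{G_0}(\chi,x)\,d\mu(x)$ we get
$$I=\int_{G_0}(\chi,x\dot+x_0)\,d\mu(x)=(\chi,x_0)\,I,$$
using multiplicativity $(\chi,x\dot+x_0)=(\chi,x)(\chi,x_0)$. Since $(\chi,x_0)\ne 1$, this forces $I=0$.

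\emph{Part 1).} Here the roles are swapped: we integrate over the subgroup $G_0^\bot\subset X$ with the measure $\nu$, normalized by $\nu(G_0^\bot)=p^0=1$. For $x\in G_0$ we have $(\chi,x)=1$ for every $\chi\in G_0^\bot$, so the integral equals $1$.

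For $x\notin G_0$ we need a character $\chi_0\in G_0^\bot$ with $(\chi_0,x)\ne1$. This uses the fact stated above that $G_0$ is the annihilator of $G_0^\bot$, i.e.\ $G_0=(G_0^\bot)^\bot$. Given such $\chi_0$, the same shift argument applies: $\chi\mapsto\chi\chi_0$ maps $G_0^\bot$ onto itself and $\nu$ is invariant under multiplicative shifts. Therefore the integral equals $(\chi_0,x)$ times itself, and so it vanishes.

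The only nonformal ingredient is the existence of $\chi_0$, that is, the duality $G_0=(G_0^\bot)^\bot$. I take this from the Pontryagin duality remark quoted in the preliminaries.

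One could also verify it directly with the Rademacher system. Write $x=\sum a_ng_n$ with some $a_n\ne0$ for an index $n<0$. Some $r_j$ with $j<0$, which lies in $G_0^\bot$, should then satisfy $(r_j,x)\ne1$. This amounts to checking that $r_j$ is trivial on $G_{j+1}$ and nontrivial on $g_j$, a property of the order-$p$ quotients.

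A minor point is the invariance of $\nu$ under shifts. It holds because $\nu$ is defined on cosets by $\nu(G_n^\bot\chi)=p^n$, independently of the coset representative, and extends uniquely from the semiring.
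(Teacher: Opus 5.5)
Your proof is correct. The paper gives no argument of its own for this lemma; it quotes it from earlier work of the author. What you wrote is the standard orthogonality argument. On the annihilator the integrand is identically $1$ and the subgroup has measure $1$. Otherwise there is a point of the domain of integration where the integrand differs from $1$, and translation invariance of $\mu$ (resp.\ $\nu$) forces the integral to vanish. In part 1) the existence of that point $\chi_0\in G_0^\bot$ is exactly the duality $G_0=(G_0^\bot)^\bot$ stated in the preliminaries, so it is legitimate to use it there.

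One small precision for your optional Rademacher check: take $j=m$, the smallest index with $a_m\neq 0$. Then $(r_m,x)=(r_m,g_m)^{a_m}$, and this is $\neq 1$. Indeed $pg_m\in G_{m+1}$ makes $(r_m,g_m)$ a nontrivial $p$-th root of unity, and $p$ is prime. A non-minimal $j$ need not work.
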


 \begin{lemma}[\cite{LS2}]
  If the mapping ${\cal A}$ is additive then \\
  1) $\int\limits_{G_n^\bot}(\chi,x)\,d\nu(\chi)=p^n{\bf
  1}_{G_n}(x)$,\\
  2) $\int\limits_{G_n}(\chi,x)\,d\mu(x)=\frac{1}{p^n}{\bf
  1}_{G_n^\bot}(\chi)$.
  \end{lemma}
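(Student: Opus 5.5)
We reduce both statements to the case $n=0$ of the preceding lemma by means of the dilation operator ${\cal A}$, using a change of variables in the Haar measures. The tools are the relations ${\cal A}G_n=G_{n-1}$ and $G_n^\bot{\cal A}=G_{n+1}^\bot$, together with the duality $(\chi{\cal A},x)=(\chi,{\cal A}x)$. Additivity of ${\cal A}$ is exactly what makes ${\cal A}$ a continuous group automorphism of $G$; dually, $\chi\mapsto\chi{\cal A}$ is then an automorphism of $X$. Without additivity, $\chi{\cal A}$ need not even be a character.

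For part 2) we argue by induction on $n$ in both directions, or directly with ${\cal A}^{n}$. Since ${\cal A}$ is an automorphism, $\mu({\cal A}E)=p\,\mu(E)$. This holds on cosets of the basic chain because $\mu G_{n-1}=p\,\mu G_n$, and extends to all measurable sets by uniqueness of Haar measure. Hence $\int_G f({\cal A}^{-1}y)\,d\mu(y)=p\int_G f(x)\,d\mu(x)$. Write $G_n={\cal A}^{-n}G_0$ and substitute $x={\cal A}^{-n}y$ with $y\in G_0$; this gives
$$\int_{G_n}(\chi,x)\,d\mu(x)=p^{-n}\int_{G_0}(\chi,{\cal A}^{-n}y)\,d\mu(y)=p^{-n}\int_{G_0}(\chi{\cal A}^{-n},y)\,d\mu(y).$$
By the preceding lemma, part 2), the last integral equals ${\bf 1}_{G_0^\bot}(\chi{\cal A}^{-n})$. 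Since $G_0^\bot{\cal A}^{n}=G_n^\bot$, this indicator equals ${\bf 1}_{G_n^\bot}(\chi)$, which gives the claim.

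Part 1) is dual. We have $\nu(E{\cal A})=p\,\nu(E)$, because $\nu(G_{n+1}^\bot)=p\,\nu(G_n^\bot)$ and again by uniqueness of Haar measure on $X$. Substituting $\chi=\eta{\cal A}^{n}$ with $\eta\in G_0^\bot$ gives $d\nu(\chi)=p^n\,d\nu(\eta)$ and $(\chi,x)=(\eta,{\cal A}^{n}x)$. The preceding lemma then yields $p^n{\bf 1}_{G_0}({\cal A}^nx)=p^n{\bf 1}_{G_n}(x)$.

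The main obstacle is justifying the scaling of the Haar measures and the change-of-variables formula. This requires checking that ${\cal A}$ is bijective and bicontinuous. The inverse is the shift $g_n\mapsto g_{n+1}$, which is clearly also additive. Continuity holds because ${\cal A}$ maps each coset of $G_n$ onto a coset of $G_{n-1}$. One must also verify $G_0^\bot{\cal A}^{n}=G_n^\bot$ for negative $n$ as well; this follows from the stated relation $G_n^\bot{\cal A}=G_{n+1}^\bot$ applied repeatedly, together with bijectivity.
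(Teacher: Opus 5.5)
Your proof is correct, and it is the standard argument for this lemma: you transport the $n=0$ case of Lemma~2.1 by the automorphism $\mathcal A^{n}$ (respectively its adjoint $\chi\mapsto\chi\mathcal A^{n}$), using $\mu(\mathcal A E)=p\,\mu(E)$, $\nu(E\mathcal A)=p\,\nu(E)$ and $(\chi\mathcal A,x)=(\chi,\mathcal A x)$. The paper gives no proof and only cites \cite{LS2}; as far as I recall, that source uses this same dilation change of variables, and your checks that $\mathcal A$ is a bicontinuous automorphism and that $G_0^\bot\mathcal A^{n}=G_n^\bot$ for all $n\in\mathbb Z$ supply the details the argument needs.
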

\begin{lemma}[\cite{LS2}]
Let  $\chi_{n,s}=r_n^{\alpha_n}r_{n+1}^{\alpha_{n+1}}\dots
r_{n+s}^{\alpha_{n+s}}$ be a character which does not belong to
$G_n^\bot$. Then
$$
\int\limits_{G_n^\bot\chi_{n,s}}(\chi,x)\,d\nu(\chi)=p^n(\chi_{n,s},x){\bf
1}_{G_n}(x).
$$
\end{lemma}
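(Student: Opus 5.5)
The plan is to reduce the integral over the coset to an integral over the subgroup $G_n^\bot$ itself by a shift of the variable, and then to apply the second lemma (part 1).

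Since $G_n^\bot\chi_{n,s}$ is a coset of the subgroup $G_n^\bot$, the substitution $\chi=\eta\,\chi_{n,s}$ with $\eta\in G_n^\bot$ maps $G_n^\bot$ bijectively onto $G_n^\bot\chi_{n,s}$. The measure $\nu$ is a Haar measure on $X$, so it is invariant under multiplication by $\chi_{n,s}$. This invariance is built into the construction: $\nu$ assigns the value $p^n$ to every coset of $G_n^\bot$, and it is then extended from the semiring $\mathscr X$. Next I use the multiplicativity of characters evaluated at a fixed point $x$, namely $(\eta\chi_{n,s},x)=(\eta,x)(\chi_{n,s},x)$. With these two facts,
$$
\int\limits_{G_n^\bot\chi_{n,s}}(\chi,x)\,d\nu(\chi)=(\chi_{n,s},x)\int\limits_{G_n^\bot}(\eta,x)\,d\nu(\eta).
$$

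For the remaining integral I invoke the preceding lemma, part 1, which gives $\int_{G_n^\bot}(\eta,x)\,d\nu(\eta)=p^n{\bf 1}_{G_n}(x)$. That lemma is stated under the hypothesis that ${\cal A}$ is additive, so I take the setting where it holds, as the citation from \cite{LS2} presumes; in the Vilenkin case it does hold. Substituting gives $p^n(\chi_{n,s},x){\bf 1}_{G_n}(x)$, which is the claim.

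The hypothesis $\chi_{n,s}\notin G_n^\bot$ is not needed for the identity. It only ensures that the coset is different from $G_n^\bot$ itself; if $\chi_{n,s}$ lay in $G_n^\bot$, the identity would still hold and would reduce to the previous lemma.

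The one point needing care is the translation invariance of $\nu$ on the full $\sigma$-algebra, as opposed to just on the cosets in $\mathscr X$. I would verify it on the semiring, where multiplication by $\chi_{n,s}$ sends a coset $G_k^\bot\psi$ to the coset $G_k^\bot\psi\chi_{n,s}$ of the same measure. The invariance then passes to the extended measure by uniqueness of the extension. Alternatively, I would check the integral identity directly on step functions constant on cosets of $G_k^\bot$ for large $k$ and pass to the limit. Since $x\mapsto(\cdot,x)$ restricted to $G_n^\bot$ is continuous and bounded, the limit step is routine.
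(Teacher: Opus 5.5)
Your proof is correct and is essentially the standard argument: the paper gives no proof of its own and refers to \cite{LS2}, where the lemma is obtained in the same way, by the shift $\chi=\eta\chi_{n,s}$ with $\eta\in G_n^\bot$, the identity $(\eta\chi_{n,s},x)=(\eta,x)(\chi_{n,s},x)$, and part 1) of the preceding lemma. Your extra care about the translation invariance of $\nu$ is sound, though the limiting step is unnecessary: if $x\in G_k$ with $k\le n$, then $\chi\mapsto(\chi,x)$ is constant on each of the finitely many cosets of $G_k^\bot$ making up $G_n^\bot$, so invariance on the semiring suffices. Your remark that the hypothesis $\chi_{n,s}\notin G_n^\bot$ is inessential is also correct.
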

\begin{lemma}[\cite{LS2}]
Let
$h_{n,s}=a_{n-1}g_{n-1}\dot+a_{n-2}g_{n-2}\dot+\dots\dot+a_{n-s}g_{n-s}\notin
G_n$. Then
$$
\int\limits_{G_n\dot+h_{n,s}}(\chi,x)\,d\mu(x)=\frac{1}{p^n}(\chi,h_{n,s}){\bf
1}_{G_n^\bot}(\chi).
$$
\end{lemma}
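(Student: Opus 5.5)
The plan is to use translation invariance of Haar measure to move the integral from the coset back to the subgroup $G_n$, and then apply part 2) of the preceding lemma for additive ${\cal A}$, which gives $\int_{G_n}(\chi,x)\,d\mu(x)=\frac{1}{p^n}{\bf 1}_{G_n^\bot}(\chi)$.

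Concretely, substitute $x=y\dot+h_{n,s}$ with $y\in G_n$. Since $\mu$ is a Haar measure on the abelian group $G$, the map $y\mapsto y\dot+h_{n,s}$ sends $G_n$ bijectively onto $G_n\dot+h_{n,s}$ and preserves $\mu$. Hence
$$
\int\limits_{G_n\dot+h_{n,s}}(\chi,x)\,d\mu(x)=\int\limits_{G_n}(\chi,y\dot+h_{n,s})\,d\mu(y).
$$
A character is a homomorphism from $(G,\dot+)$ to the unit circle, so $(\chi,y\dot+h_{n,s})=(\chi,y)(\chi,h_{n,s})$. The factor $(\chi,h_{n,s})$ does not depend on $y$ and comes out of the integral. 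The remaining integral $\int_{G_n}(\chi,y)\,d\mu(y)$ equals $\frac{1}{p^n}{\bf 1}_{G_n^\bot}(\chi)$ by the previous lemma, which gives the stated formula.

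The hypothesis $h_{n,s}\notin G_n$ is only there to make the coset nontrivial; the argument works for any $h\in G$. The explicit form of $h_{n,s}$ as a combination of $g_{n-1},\dots,g_{n-s}$ is likewise not needed, beyond its being a fixed group element.

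The only step that deserves care is the appeal to the preceding lemma, which was stated under the assumption that ${\cal A}$ is additive; here we use only its part 2). If one prefers not to rely on that lemma, part 2) can be checked directly:
\begin{itemize}
\item If $\chi\in G_n^\bot$, the integrand is identically $1$, so the integral is $\mu G_n=p^{-n}$.
\item If $\chi\notin G_n^\bot$, pick $z\in G_n$ with $(\chi,z)\ne1$. Invariance of $\mu$ under translation by $z$ gives $I=(\chi,z)I$ for $I=\int_{G_n}(\chi,y)\,d\mu(y)$, hence $I=0$.
\end{itemize}
No other obstacle is expected.
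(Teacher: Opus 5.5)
Your proof is correct. The paper gives no proof of this lemma and simply quotes it from \cite{LS2}; your route (translate by $h_{n,s}$ using invariance of Haar measure, pull out $(\chi,h_{n,s})$ by multiplicativity of the character, and evaluate $\int_{G_n}(\chi,y)\,d\mu(y)=p^{-n}{\bf 1}_{G_n^\bot}(\chi)$) is the standard derivation of this result. Your direct check of that last integral, using $\mu G_n=p^{-n}$ and the averaging identity $I=(\chi,z)I$ with $z\in G_n$ and $G_n\dot+z=G_n$, is also sound. It confirms, as you noted, that neither the additivity of ${\cal A}$ nor the condition $h_{n,s}\notin G_n$ is needed.
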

 \begin{definition}
Let $M,N\in\mathbb N$.
We denote by  ${\mathfrak D}_{G_M}(G_{-N})$ the set of functions
 $f\in L_2(G)$ such that 1) ${\rm supp}\,f\subset G_{-N}$, and 2)
 $f$ is constant on cosets $G_M\dot+g$. The class ${\mathfrak
 D}_{G_{-N}^\bot} (G_{M}^\bot)$ is defined similarly.
 \end{definition}

\section{Construction of a step refinable function in the Wilenkin group. Case $M\ge 1$}
Let $M,N\in \mathbb N,N\ge M \ge 1 $.
 We want to construct a refinable function
 $\varphi\in \mathfrak D_{G_M}(G_{-N})$, i.e.
$\hat{\varphi}\in \mathfrak D_{G_{-N}^\bot}(G_{M}^\bot)$.
Let us write the refinable equation in the Fourier domain

 \begin{equation}\label{eq2.1}
 \hat\varphi{(\chi)}=\hat\varphi(\chi A^{-1})m_0(\chi),
  \end{equation}
 with the mask
  \begin{equation} \label{eq2.2}
  m_0(\chi)=\sum_{h\in
  H_0^{(N+1)}}\beta_h\overline{(\chi,A^{-1}h)}
 \end{equation}
 It is known \cite{LS2} that :\\
 1)$m_0$  is constant on cosets  $G_{-N}^\bot r_{-N}^{\alpha_{-N}}...r_{-N+s}^{\alpha_{-N+s}}$,\\
  2)the mask  $m_0(\chi)$ is a periodic  function with any period
 $r_1^{\alpha_1}r_2^{\alpha_2}\dots r_s^{\alpha_s}$ $(s\in\mathbb
 N,\; \alpha_j=\overline{0,p-1},\;j=\overline{1,s})$,\\
3)the mask $m_0(\chi)$ is defined by its values on cosets $G_{-N}^\bot r_{-N}^{\alpha_{-N}}\dots r_0^{\alpha_0}$
$(\alpha_j=\overline{0,p-1})$.


 Let $m_0(G_{-N}^\bot )=1$. If $\hat{\varphi}(\chi)=0$ outside the set $G_{M}^\bot$, then
$$
\hat{\varphi}(\chi)=m_0(\chi)m_0(\chi\mathcal A^{-1})...m_0(\chi\mathcal A^{-N-M}).
$$
Denote by
 \begin{equation} \label{eq2.3}
 m_0(G_{-N}^\bot r_{-N}^{\alpha_{-N}}r_{-N+1}^{\alpha_{-N+1}}...r_{0}^{\alpha_{0}}...r_{M}^{\alpha_{M}})=: \lambda_{ \alpha_{-N}\alpha_{-N+1}...\alpha_{0}...\alpha_{M}}=\lambda_n,
 \end{equation}
 the values of the mask on  $G_{M+1}^\bot$,
 where
  \begin{equation} \label{eq2.4}
 n= \alpha_{-N}+\alpha_{-N+1}p+...+\alpha_{0}p^N +...+\alpha_M p^{N+M} ,
 \end{equation}
  We need to find $\lambda_n$  so that
$$
\hat{\varphi}(\chi)=m_0(\chi)m_0(\chi\mathcal A^{-1})...m_0(\chi\mathcal A^{-N-M})=0
$$
on the set $G_{M+1}^\bot\setminus G_{M}^\bot$ and $\hat{\varphi}(\chi)\neq 0$ for some $\chi \in G_M^\bot \setminus G_{M-1}^\bot$.
To find $\lambda_n$  we construct a rooted  mask tree $T=T(m_0)$ in the following way.
For some $n\in \mathbb N: p^{M+N}\le n\le p^{M+N+1}-1$ we construct the path
$$
\lambda_n\rightarrow \,\lambda_{n \,{\rm div}\, p} \rightarrow \lambda_{n \,{\rm div}\, p^2}\dots \rightarrow \lambda_{n \,{\rm div}\, p^{M+N}}\rightarrow \lambda_0=1
$$
 from the leaf $\lambda_n$ to the root $\lambda_0$ of the  tree $T(m_0)$, where $a \,{\rm div}\, b$ stands for the integer division of $a$ by $b$. It is clear that $H=M+N+1$ is a high of this  tree $T$. (See Figure 1 for the tree $T$).

\unitlength=0.80mm
  \begin{picture}(240,100)
 \small
\put(111,71){\line (-1,1){16}}
     \put(112,71){\line (0,1){16}}
     \put(113,71){\line (1,1){16}}
     \put(90,88){$\lambda_{n\dot+1}$}
      \put(125,88){$\lambda_{n\dot+(p-1)}$}
      \put(108,88){$\lambda_{n}$}

  \put(150,48){\line (-1,1){16}}
     \put(151,48){\line (0,1){16}}
     \put(152,48){\line (1,1){16}}
     \put(125,66){$\lambda_{p^{N+M}-p}$}
      \put(160,66){$\lambda_{p^{N+M}-1}$}

     \put(58,48){\line (-1,1){16}}
    \put(59,48){\line (0,1){16}}
     \put(60,48){\line (1,1){16}}
      \put(39,70){\line (0,1){16}}
      \put(169,70){\line (0,1){16}}
       \put(38,87){$\lambda_{p^{N+M}}$}
\put(160,87){$\lambda_{p^{N+M+1}-1}$}
     \put(38,66){$\lambda_{p^{N+M-1}}$}
      \put(70,66){$\lambda_{p^{N+M-1}+p-1}$}
       \put(105,66){$\lambda_{n\  {\rm div}\  p}$}
      \multiput(51,64)(3,0){6}{$\cdot$}

    \multiput(60,46)(3,0){31}{$\cdot$}
   \put(130,25){\line (-1,1){16}}
     \put(131,25){\line (0,1){16}}
     \put(132,25){\line (1,1){16}}
     \put(110,43){$\lambda_{p^{N+1}-p}$}
      \put(146,43){$\lambda_{p^{N+1}-1}$}
      \multiput(124,41)(3,0){6}{$\cdot$}
      \multiput(74,41)(3,0){4}{$\cdot$}

  \put(124,19){$\lambda_{p-1}$}
  \put(81,19){$\lambda_{1}$}
   \put(105,19){$\lambda_{\nu}$}
   \put(80,25){\line (-1,1){16}}
     \put(81,25){\line (0,1){16}}
     \put(82,25){\line (1,1){16}}
     \put(60,43){$\lambda_{p^{N}}$}
      \put(86,43){$\lambda_{p^{N}+p-1}$}

  \multiput(81,22)(3,0){17}{$\cdot$}

 \put(100,-2){$\lambda_{0}=1$}
  \put(100,2){\line (-1,1){16}}
  \put(106,2){\line (0,1){16}}
  \put(110,2){\line (1,1){16}}
  \put(-5,66){$ G_{M}^\bot \setminus G_{M-1}^\bot $:}
  \put(-5,86){$ G_{M+1}^\bot \setminus G_{M}^\bot $:}
  \put(-5,43){$ G_{1}^\bot \setminus G_{-0}^\bot $:}
  \put(-5,19){$ G_{-N+1}^\bot \setminus G_{-N}^\bot $:}
  \put(-5,-2){$ G_{-N}^\bot  $:}

    \end{picture}\\

\hskip4cm Figure 1.  The graph of the tree $T=T(m_0)$.\\

 To construct a scaling function $\varphi$ for which $\hat\varphi\in \mathfrak{D}_{G_{-N}^\bot}(G_M^\bot)$, the nodes $\lambda_j$ must be chosen so that:\\
 A1) on each branch with a leaf in $ G_{M+1}^\bot \setminus G_{M}^\bot $ there is at least one zero,\\
A2) there exists at least one path from the root $\lambda_0$ to a leaf (node) $\lambda_{p^{N+M-1}+l}$ at the level $G_{M}^\bot \setminus G_{M-1}^\bot $ on which all $\lambda_\nu\ne 0$.

\begin{theorem}
We select the value $\lambda_n$ at level $G^\bot_{M+1}\setminus G^\bot_{M}$ so that \\
1) in the  $p$-ary representation of the number
$$
n=\alpha_{-N}p^0+\alpha_{-N+1}p^1+...+\alpha_{0}p^N+\alpha_{1}p^{N+1}+
...+\alpha_{M-1}p^{N+M-1}+\alpha_{M}p^{N+M}
$$
all coefficients  $\alpha_k>0$,\\
2) vectors  \\
${\bf a}_{-N+1}=(\alpha_{-N+1},\alpha_{-N+2},...,\alpha_{-1},\alpha_{0})$\\
${\bf a}_{-N+2}=(\alpha_{-N+2},\alpha_{-N+3},...,\alpha_{0},\alpha_{1})$\\
.................................................\\
${\bf a}_{-N+M+1}=(\alpha_{-N+M+1},\alpha_{-N+M+2},...,\alpha_{M-1},\alpha_{M})$\\
\noindent

of length $N$ are pairwise distinct.\\
 Then for any  $\alpha_{-N}=1,2,...,p-1$ and  $\alpha_{M}=1,2,...,p-1$ the path
 \begin{equation} \label{eq3.159}
 \lambda_n\rightarrow\lambda_{n\  {\rm div} \ p}\rightarrow\lambda_{({n\  {\rm div}^2 \ p})} \rightarrow ...\rightarrow \lambda_{({n\  {\rm div}^{N+M-1} \ p})}\rightarrow \lambda_{({n\  {\rm div}^{N+M} \ p})}
 \end{equation}
  defines  a refinable  function  $\varphi$, for which $\hat{\varphi}\in \mathfrak{D}_{G_{-N}^\bot}(G_M^\bot)$.
\end{theorem}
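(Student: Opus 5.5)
The plan is to make the hypotheses of the Theorem yield conditions A1) and A2) for a suitable assignment of the values $\lambda_k$, $0\le k\le p^{N+1}-1$. Once A1) and A2) hold, the function $\hat\varphi(\chi)=\prod_{j=0}^{N+M}m_0(\chi\mathcal A^{-j})$ does the job. It vanishes on $G_{M+1}^\bot\setminus G_M^\bot$. It is constant on cosets of $G_{-N}^\bot$, by property 1) of the mask and because $G_{-N}^\bot\mathcal A^{-j}\subset G_{-N}^\bot$. It is bounded with compact support, hence lies in $L_2$, and it is not identically zero. Thus $\hat\varphi\in\mathfrak D_{G_{-N}^\bot}(G_M^\bot)$. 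Together with $m_0(G_{-N}^\bot)=1$ this gives the refinement equation \eqref{eq2.1}, as in the discussion preceding the Theorem.

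\textbf{Step 1 (reduction by periodicity).} By property 2) of the mask, $m_0$ is periodic with periods $r_1^{\alpha_1}\cdots r_s^{\alpha_s}$. Hence a node $\lambda_k$ of the tree with $k\ge p^{N+1}$ equals $\lambda_{k \bmod p^{N+1}}$. I will write this out explicitly for the path \eqref{eq3.159}.

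For $0\le j\le M$, the node $\lambda_{n\,{\rm div}\,p^{j}}$ equals the value of the mask on the window
$$
{\bf w}_j=(\alpha_{-N+j},\dots,\alpha_j),
$$
which has length $N+1$. The lower nodes $\lambda_{n\,{\rm div}\,p^{M+k}}$, $1\le k\le N$, have indices below $p^{N+1}$. Their $p$-ary digits are the top segments $(\alpha_{-N+M+k},\dots,\alpha_M)$. Because all $\alpha_i>0$, each of these lower indices has exactly the number of digits that its level prescribes. So they are genuinely distinct nodes at distinct levels, and they never collide with the full windows.

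\textbf{Step 2 (the choice of values).} I put $\lambda_0=1$. I assign nonzero values (say $1$) to all nodes on the path except the leaf, i.e. to ${\bf w}_0,\dots,{\bf w}_{M-1}$ and to the lower nodes. The leaf window ${\bf w}_M=(\alpha_{M-N},\dots,\alpha_M)$ gets the value $0$. All remaining $\lambda_k$, $k<p^{N+1}$, are also set to $0$.

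For this to be consistent, ${\bf w}_M$ must differ from every ${\bf w}_j$ with $j<M$. This holds because ${\bf w}_j$ ends with the vector ${\bf a}_{-N+j+1}$, and these length-$N$ vectors are pairwise distinct by hypothesis 2). With this choice A2) holds: the path from the root to $\lambda_{n\,{\rm div}\,p}$ has no zeros.

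\textbf{Step 3 (A1, the main obstacle).} I must show that every root-to-leaf path ending in $G_{M+1}^\bot\setminus G_M^\bot$ meets a zero. Take such a path, with leaf index $n'=\sum\alpha'_ip^{i+N}$, and suppose it has no zero.

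First, the lower part of the path forces the top digits to agree: $\alpha'_i=\alpha_i$ for the top $N+1$ positions, since only the prescribed prefixes are nonzero.

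Next, I argue by downward induction on $j=M-1,\dots,0$ and then $j=M$. The window ${\bf w}'_j$ overlaps ${\bf w}'_{j+1}$ in $N$ coordinates, and ${\bf w}'_j$ must belong to the nonzero set $\{{\bf w}_0,\dots,{\bf w}_{M-1}\}$. Since the overlap vectors ${\bf a}_{-N+j+1}$ are pairwise distinct, each length-$N$ vector determines at most one admissible window having it as a prefix or suffix. So the overlap graph on the nonzero windows is a simple path, and the digits of $n'$ are forced to coincide with those of $n$. But then the leaf of the path is ${\bf w}_M$, whose value is $0$, a contradiction.

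The delicate point is matching the directions of the overlaps, i.e. which digit is read off at each step of the walk from the root upward. I expect this bookkeeping, and checking that positivity of the digits excludes spurious coincidences with shorter indices, to be the main work.
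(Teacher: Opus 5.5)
\textbf{There is a genuine gap: Step 2 puts the zero at the wrong end of the path, and as written this breaks the argument.} By your own Step 1, the leaf $\lambda_n$ is the window $\mathbf w_0=(\alpha_{-N},\dots,\alpha_0)$. The window $\mathbf w_M$ is the node $\lambda_{n\,{\rm div}\,p^M}$, which sits at level $G_1^\bot\setminus G_0^\bot$ on the path from the root to $\lambda_{n\,{\rm div}\,p}$ (for $M=1$ it is that node).

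Putting $0$ on $\mathbf w_M$ therefore kills A2, and in fact kills everything beyond $G_0^\bot$. Consider a node at level $G_1^\bot\setminus G_0^\bot$ with no zero between it and the root. The lower nodes force its last $N$ coordinates to be $\mathbf a_{-N+M+1}$. But the only nonzero windows you allow at that level are $\mathbf w_0,\dots,\mathbf w_{M-1}$, and they end in $\mathbf a_{-N+1},\dots,\mathbf a_{-N+M}$, all different from $\mathbf a_{-N+M+1}$. So your $\hat\varphi$ vanishes off $G_0^\bot$. A1 then holds only vacuously, and your claim that A2 holds is false.

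Step 3 is written for the same reversed picture. The direction question you defer is exactly where it breaks. Walking from the root, a child's window is $(x,\ \text{first } N \text{ coordinates of its parent's window})$. So at each step you need at most one nonzero window with a prescribed \emph{final} block of length $N$. Hypothesis 2) says precisely that the final blocks $\mathbf a_{-N+j+1}$ of $\mathbf w_j$, $0\le j\le M$, are distinct. For your nonzero set $\mathbf w_0,\dots,\mathbf w_{M-1}$, the initial blocks involve $\mathbf a_{-N}=(\alpha_{-N},\dots,\alpha_{-1})$, which the hypothesis does not control. So your hedge ``prefix or suffix'' is not harmless.

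\textbf{Repair.} Put $0$ on the leaf window $\mathbf w_0$, put $1$ on $\mathbf w_1,\dots,\mathbf w_M$, on the lower nodes and on $\lambda_0$, and put $0$ everywhere else.
\begin{itemize}
\item Consistency: $\mathbf w_0$ and $\mathbf w_j$ end in $\mathbf a_{-N+1}\neq\mathbf a_{-N+j+1}$. Also $\mathbf w_0$ ends in $\alpha_0>0$, whereas the lower nodes and $\lambda_0$ end in $0$.
\item A1: the walk from the root forces $\alpha'_i=\alpha_i$ for $i\ge -N+1$. The leaf window $(\alpha'_{-N},\mathbf a_{-N+1})$ can then be nonzero only if it equals $\mathbf w_0$, which is $0$.
\end{itemize}
Corrected this way, your argument is a special case of the paper's. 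The paper puts $1$ on the path nodes and $0$ on all siblings of path nodes (all $p$ children at the top level), and leaves the other values free; that freedom is what produces the several surviving paths in Example 2. It gets A1 because any leaf path meets a zero at the level where it leaves the path of $n$. Its consistency check rests on the same distinctness of final blocks.

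\textbf{A smaller point.} $\prod_{j=0}^{N+M}m_0(\chi\mathcal A^{-j})$, read on all of $X$, is not compactly supported. Take the infinite product instead. It is finite at each $\chi$ because $m_0=1$ on $G_{-N}^\bot$, and it satisfies \eqref{eq2.1} automatically. Vanishing on $G_{M+1}^\bot\setminus G_M^\bot$ then propagates through \eqref{eq2.1} to all of $X\setminus G_M^\bot$.
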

{\bf Proof.}  (See Figure 2) We assume\\
1)  $\lambda_n=\lambda_{n\dot+j}$=0 for $j=0,1,..,p-1$,\\
2) $\lambda_{n\ {\rm div}\ p}=1$ but  $\lambda_{(n\ {\rm div} \ p)\dot+j}=0$ for $j=1,..,p-1$,\\
3) $\lambda_{n\ {\rm div}^\nu p}=1$, $\lambda_{(n\  {\rm div}\ p^\nu)\dot+j}$=0 for $j=1,..,p-1$, $\nu=1,2,..,M+N$ . \\
   This is possible, since according to the condition of the theorem  the vectors  ${\bf a}_{-N+1},...,{\bf a}_{-N+M+1}$ are pairwise distinct. Convinced of it.
  The principle for choosing values $\lambda_{n\ {\rm div}^\nu p}$ is illustrated in Figure 2.

\unitlength=0.80mm
  \begin{picture}(240,56)
 \small
  \put(124,49){$\lambda_{n\dot+(p-1)}=0$}
  \put(81,49){$\lambda_{n\dot+1}=0$}
   \put(105,49){$\lambda_{n}=0$}

 \put(98,28){$\lambda_{n\ {\rm div}\ p}=1$}
  \put(100,32){\line (-1,1){16}}
    \put(106,32){\line (0,1){16}}
  \put(110,32){\line (1,1){16}}
   \put(0,48){$G_{M+1}^\bot \setminus G_{M}^\bot $:}

 \put(0,28){$ G_{M}^\bot \setminus G_{M-1}^\bot $:}
   \put(0,-4){$ G_{M-\nu +1}^\bot \setminus  G_{M-\nu}^\bot $:}
   \put(124,29){$\lambda_{n\ {\rm div}\ p\dot+k}=0$}
  \put(68,29){$\lambda_{n\ {\rm div}\ p\dot+l}=0$}

  \put(100,10){\line (-1,1){16}}
  \put(106,10){\line (0,1){16}}
  \put(110,10){\line (1,1){16}}

  \multiput(0,2)(10,0){20}{$\cdots$}
  \put(96,-4){$\lambda_{n\ \rm div \ p^\nu}=1$}
\put(126,-4){$\lambda_{n\ \rm div \ p^\nu \dot+k}=0$}
\put(60,-4){$\lambda_{n\ \rm div \ p^\nu \dot+l}=0$}
   \put(60,8){$\lambda_{n\ \rm div \ p^2 \dot+l}=0$}
    \put(96,8){$\lambda_{n\ \rm div \ p^2}=1$}
    \put(126,8){$\lambda_{n\ \rm div \ p^2 \dot+k}=0$}

    \end{picture}\\

\vskip1cm
\hskip4cm Figure 2. Subtree with  the leaf  $\lambda_n$ .\\

Let's consider several cases.\\
 Case 1. Consider a node $\lambda_n$.\\
 From the p-ary representation of the number $n$, we find

$$\lambda_n=m_0(G_{-N}^\bot r_{-N}^{\alpha_{-N}}r_{-N+1}^{\alpha_{-N+1}}...r_{-1}^{\alpha_{-1}}r_{0}^{\alpha_{0}}
r_{1}^{\alpha_{1}}...r_{M-2}^{\alpha_{M-2}}r_{M-1}^{\alpha_{M-1}}r_{M}^{\alpha_{M}}).$$

 For convenience, we denote $\tilde\alpha_{j}\neq \alpha_{j}$ and
$$\tilde\lambda_n=m_0(G_{-N}^\bot r_{-N}^{\tilde\alpha_{-N}}r_{-N+1}^{\alpha_{-N+1}}...r_{-1}^{\alpha_{-1}}r_{0}^{\alpha_{0}}
r_{1}^{\alpha_{1}}...r_{M-2}^{\alpha_{M-2}}r_{M-1}^{\alpha_{M-1}}r_{M}^{\alpha_{M}}).
$$

 Using the periodicity of the mask, we have
\begin{equation} \label{eq3.16}
\quad \lambda_n
=m_0(G_{-N}^\bot r_{-N}^{\alpha_{-N}}r_{-N+1}^{\alpha_{-N+1}}.
..r_{-1}^{\alpha_{-1}}r_{0}^{\alpha_{0}}),
\tilde\lambda_n= m_0(G_{-N}^\bot r_{-N}^{\tilde\alpha_{-N}}r_{-N+1}^{\alpha_{-N+1}}...r_{-1}^{\alpha_{-1}}r_{0}^{\alpha_{0}}).
\end{equation}
We require  that $\lambda_n=\tilde\lambda_n= 0$.\\
Case 2. Consider a node  $\lambda_{n\ {\rm  div}\ p}$. Since
$$n\ {\rm  div}\ p=\alpha_{-N+1}p^0+\alpha_{-N+2}p^1+...+\alpha_{1}p^N+
...+\alpha_{M-1}p^{N+M-2}+\alpha_{M}p^{N+M-1},
$$
we have by the periodicity
$$
\lambda_{n\ {\rm  div}\ p}=
m_0(G_{-N}^\bot r_{-N}^{\alpha_{-N+1}}r_{-N+1}^{\alpha_{-N+2}}...r_{-1}^{\alpha_{0}}r_{0}^{\alpha_{1}}),
$$

$$
\tilde\lambda_{n\ {\rm  div}\ p}
=m_0(G_{-N}^\bot r_{-N}^{\widetilde{\alpha}_{-N+1}}r_{-N+1}^{\alpha_{-N+2}}...r_{-1}^{\alpha_{0}}r_{0}^{\alpha_{1}}).
$$
 Since the vectors
$(\alpha_{-N+1},\alpha_{-N+2},...,\alpha_{-1},\alpha_{0})$ and
$(\alpha_{-N+2},\alpha_{-N+3},...,\alpha_{0},\alpha_{1})$

are different (this is given ),  then vectors
$$
(\alpha_{-N},\alpha_{-N+1},\alpha_{-N+2},...,\alpha_{-1},\alpha_{0}), \ (\tilde{\alpha}_{-N},\alpha_{-N+1},\alpha_{-N+2},...,\alpha_{-1},\alpha_{0})
$$
and
$$
(\alpha_{-N+1},\alpha_{-N+2},\alpha_{-N+3},...,\alpha_{0},\alpha_{1}), \
(\tilde{\alpha}_{-N+1},\alpha_{-N+2},\alpha_{-N+3},...,\alpha_{0},\alpha_{1})
$$ are
also different. (We have added new components.)

Therefore we require that

$$
\lambda_{n\ {\rm  div}\ p}=m_0(G_{-N}^\bot r_{-N}^{\alpha_{-N+1}}r_{-N+1}^{\alpha_{-N+2}}...r_{-1}^{\alpha_{0}}r_{0}^{\alpha_{1}})=1,
$$
and
$$
\tilde\lambda_{n\ {\rm  div}\ p}=m_0(G_{-N}^\bot r_{-N}^{\widetilde{\alpha}_{-N+1}}r_{-N+1}^{\alpha_{-N+2}}...r_{-1}^{\alpha_{0}}r_{0}^{\alpha_{1}})=0
$$
(see Figure 2.).

Case 3. For $1<\nu\le N+M$ we have
$$
{n\ {\rm  div}^\nu\ p}=\alpha_{-N+\nu}p^0+\alpha_{-N+\nu+1}p^1+...+\alpha_{\nu-1}p^{N-1}+\alpha_{\nu}p^{N}+
...+\alpha_{M}p^{N+M-\nu},
$$
and
$$
\lambda_{n\ {\rm  div}^\nu\ p}=m_0(G_{-N}^\bot r_{-N}^{\alpha_{-N+\nu}}r_{-N+1}^{\alpha_{-N+\nu+1}}...r_{-1}^{\alpha_{\nu -1}}r_{0}^{\alpha_{\nu}}r_{1}^{\alpha_{\nu+1}}
...r_{M-\nu}^{\alpha_{M}}).
$$

If $1\le \nu\le M$, then, we have  to the periodicity,
\begin{equation} \label{eq3.21}
\lambda_{n\ {\rm  div}^\nu\ p}=m_0(G_{-N}^\bot r_{-N}^{\alpha_{-N+\nu}}r_{-N+1}^{\alpha_{-N+\nu+1}}...r_{0}^{\alpha_{\nu}}),
\end{equation}
\begin{equation} \label{eq3.211}
\tilde\lambda_{n\ {\rm  div}^\nu\ p}=m_0(G_{-N}^\bot r_{-N}^{\tilde\alpha_{-N+\nu}}r_{-N+1}^{\alpha_{-N+\nu+1}}...r_{0}^{\alpha_{\nu}}).
\end{equation}

We require that the equalities
\begin{equation} \label{eq3.212}
\lambda_{n\ {\rm  div}^\nu\ p}=m_0(G_{-N}^\bot r_{-N}^{\alpha_{-N+\nu}}r_{-N+1}^{\alpha_{-N+\nu+1}}...r_{0}^{\alpha_{\nu}})=1,
\end{equation}
\begin{equation} \label{eq3.213}
\tilde\lambda_{n\ {\rm  div}^\nu\ p}=m_0(G_{-N}^\bot r_{-N}^{\tilde\alpha_{-N+\nu}}r_{-N+1}^{\alpha_{-N+\nu+1}}...r_{0}^{\alpha_{\nu}})=0,
\end{equation}
hold for $1\le  \nu\le M$.

This requires that the vectors\\
$(\alpha_{-N+1},\alpha_{-N+2},\alpha_{-N+3},...,\alpha_{0},\alpha_{1}), \quad
(\tilde\alpha_{-N+1},\alpha_{-N+2},\alpha_{-N+3},...,\alpha_{0},\alpha_{1})$\\
$(\alpha_{-N+2},\alpha_{-N+3},\alpha_{-N+4},...,\alpha_{1},\alpha_{2}), \quad
(\tilde\alpha_{-N+2},\alpha_{-N+3},\alpha_{-N+4},...,\alpha_{1},\alpha_{2})$

..............................................................................\\
$(\alpha_{-N+M},\alpha_{-N+M+1},...,\alpha_{M-1},\alpha_{M}),\quad
(\tilde\alpha_{-N+M},\alpha_{-N+M+1},...,\alpha_{M-1},\alpha_{M})$\\
 whose length is $N+1$, be pairwise distinct.\\
These vectors will be pairwise distinct if the vectors
 \\
$(\alpha_{-N+1},\alpha_{-N+2},...,\alpha_{-1},\alpha_{0})$,\\
$(\alpha_{-N+2},\alpha_{-N+3},...,\alpha_{0},\alpha_{1})$,\\

...........................................\\
$(\alpha_{-N+M},\alpha_{-N+M+1},...,\alpha_{M-2},\alpha_{M-1})$,\\
$(\alpha_{-N+M+1},\alpha_{-N+M+2},...,\alpha_{M-1},\alpha_{M})$,\\
 are pairwise distinct. But this is given by the hypothesis.
Note that (\ref{eq3.21}-\ref{eq3.213}) are the values of the mask on
$G_{1}^\bot\setminus G_{0}^\bot  $.

If $M<\nu\le N+M$, then
\begin{equation} \label{eq3.24}
\lambda_{n\ {\rm  div}^\nu\ p}=
m_0(G_{-N}^\bot r_{-N}^{\alpha_{-N+\nu}}r_{-N+1}^{\alpha_{-N+\nu+1}}...r_{M-\nu}^{\alpha_{M}})
\end{equation}
\begin{equation} \label{eq3.25}
\tilde{\lambda}_{n\ {\rm  div}^\nu\ p}=
m_0(G_{-N}^\bot r_{-N}^{\widetilde{\alpha}_{-N+\nu}}r_{-N+1}^{\alpha_{-N+\nu+1}}...r_{M-\nu}^{\alpha_{M}})\quad \widetilde{\alpha}_{-N+\nu}\neq \alpha_{-N+\nu}
\end{equation}
In particular, for $\nu=M+1,M+2, ....,M+N$ , we have ,

\begin{equation} \label{eq3.34}
\lambda_{n\ {\rm  div}^{M+1}\ p}=
m_0(G_{-N}^\bot r_{-N}^{\alpha_{-N+M+1}}r_{-N+1}^{\alpha_{-N+M+2}}...r_{-1}^{\alpha_{M}}r_{0}^0),
\end{equation}

\hskip 2.5cm $ ................................................. ..................    $
\begin{equation} \label{eq3.37}
\lambda_{n\ {\rm  div}^{M+N}\ p}=
m_0(G_{-N}^\bot r_{-N}^{\alpha_{M}}r_{-N+1}^0 r_{-N+2}^0...r_{-1}^0r_{0}^0).
\end{equation}

\begin{equation} \label{eq3.38}
\tilde{\lambda}_{n\ {\rm  div}^{M+1}\ p}=
m_0(G_{-N}^\bot r_{-N}^{\tilde{\alpha}_{-N+M+1}}r_{-N+1}^{\alpha_{-N+M+2}}...r_{-1}^{\alpha_{M}}r_{0}^0),
\end{equation}

\hskip 2.5cm $ ................................................. ..................    $
\begin{equation} \label{eq3.39}
\tilde{\lambda}_{n\ {\rm  div}^{M+N}\ p}=
m_0(G_{-N}^\bot r_{-N}^{\tilde{\alpha}_{M}}r_{-N+1}^0 r_{-N+2}^0...r_{-1}^0r_{0}^0).
\end{equation}

It is clear that
(\ref{eq3.34}-\ref{eq3.39}) are mask $m_0$ values on sets
$$G_{0}^\bot\setminus G_{-1}^\bot,  G_{-1}^\bot\setminus G_{-2}^\bot, ...,G_{-N+2}^\bot\setminus G_{-N+1}^\bot , G_{-N+1}^\bot\setminus G_{-N}^\bot .
$$

Thus, vectors \\
$(\alpha_{-N},\alpha_{-N+1},\alpha_{-N+2},...,\alpha_{-1},\alpha_{0}), \quad
(\tilde\alpha_{-N},\alpha_{-N+1},\alpha_{-N+2},...,\alpha_{-1},\alpha_{0})$\\
$(\alpha_{-N+1},\alpha_{-N+2},\alpha_{-N+3},...,\alpha_{0},\alpha_{1}), \quad
(\tilde\alpha_{-N+1},\alpha_{-N+2},\alpha_{-N+3},...,\alpha_{0},\alpha_{1})$\\
.................................................\\
$(\alpha_{-N+M},\alpha_{-N+M+1},...,\alpha_{M-1},\alpha_{M}), \quad
(\tilde\alpha_{-N+M},\alpha_{-N+M+1},...,\alpha_{M-1},\alpha_{M})$\\

\noindent
$(\alpha_{-N+M+1},\alpha_{-N+M+2},...,\alpha_{M},0)\quad
(\tilde\alpha_{-N+M+1},\alpha_{-N+M+2},...,\alpha_{M},0)$\\
.................................................\\
$(\alpha_{M},0,...,0,0),\quad
(\tilde\alpha_{M},0,...,0,0)$\\
 are pairwise distinct.

 Since the mask is completely determined by its values,  the statements \\
 1)  $\lambda_n=\lambda_{n\dot+j}$=0 for $j=0,1,..,p-1$,\\
2) $\lambda_{n\ {\rm div}\ p}=1$ but  $\lambda_{(n\ {\rm div} \ p)\dot+j}=0$ for $j=1,..,p-1$,\\
3) $\lambda_{n\ {\rm div}^\nu p}=1$, $\lambda_{(n\  {\rm div}\ p^\nu)\dot+j}$=0 for $j=1,..,p-1$, $\nu=1,2,..,M+N$ . \\
 are valid, and the theorem has been proven.\\
In particular $$\lambda_{n\ {\rm  div}^{N+M}p}=m_0(G_{-N}^\bot r_{-N}^{\alpha_{M}})=1,$$
$$\tilde{\lambda}_{n\ {\rm  div}^{N+M}p}=m_0(G_{-N}^\bot r_{-N}^{\widetilde{\alpha}_{M}})=1 \ \ (\widetilde{\alpha}_{M}=0).$$

Thus, for any $\alpha_{-N}=1,2,...,p-1$ , the path  $(\ref{eq3.159})$ generates a scaling function.
$\square$

\section{ Examples }
\subsection{Example 1, $N=M=2, p=3$}
 Select  $$\lambda_n=m_0(G_{-2}^\bot r_{-2}^{\alpha_{-2}} r_{-1}^{\alpha_{-1}}r_{0}^{\alpha_{0}}
r_{1}^{\alpha_{1}}r_{2}^{\alpha_{2}}) $$
 on level $G_3^\bot \setminus G_2^\bot$,
 $n=\alpha_{-2}+p\alpha_{-1}+p^2\alpha_0+p^3\alpha_1+p^4\alpha_2$.
  The numbers $\alpha_j$ are the coefficients of the ternary expansion of the number $n$. We form pairwise distinct vectors (see Theorem 3.1)

  $$ (\alpha_{-1},\alpha_{0})=(2,1),\ (\alpha_{0},\alpha_{1})=(1,2),\ (\alpha_{1},\alpha_{2})=(2,2).
  $$
  Thus, we have obtained the coefficients $\alpha_{-1}=2,\alpha_{0}=1,\alpha_{1}=2,\alpha_{2}=2$.
  For   $\alpha_{-2}=1$ we obtain  $n=1+3\cdot 2+3^2\cdot 1+3^3\cdot 2+3^4\cdot 2 =232$.
    Now we find the path
    $$((((232 \ {\rm div} \ 3) \ {\rm div}\  3) \ {\rm div} \ 3) \ {\rm div}\  3)=232 \rightarrow 77 \rightarrow 25 \rightarrow 8\rightarrow 2$$
      that generates the scaling function  (see Figure 3).

\unitlength=0.60mm
  \begin{picture}(240,120)
 \small

     \put(205,48){\line (-1,1){16}}
     \put(205,48){\line (0,1){16}}
     \put(207,48){\line (1,1){16}}

     \put(182,66){$\lambda_{24}=0$}
     \put(204,66){$\lambda_{25}=1$}
      \put(225,66){$\lambda_{26}=0$}
      \put(202,66){\textcolor{green}{$\bigcirc$}}

      \put(205,69){\line (-1,1){16}}
     \put(206,69){\line (0,1){16}}
     \put(207,69){\line (1,1){16}}

     \put(180,86){$\lambda_{75}=0$}
     \put(201,86){$\lambda_{76}=0$}
      \put(221,86){$\lambda_{77}=1$}
      \put(221,86){\textcolor{green}{$\bigcirc $}}

        \put(223,90){\line (-1,1){16}}
     \put(223,90){\line (0,1){16}}
     \put(223,90){\line (1,1){16}}

      \put(200,108){$\lambda_{231}=$}
     \put(217,108){$\lambda_{232}=$}
      \put(234,108){$\lambda_{233}=0$}
      \put(217,108){\textcolor{red}{$\bigcirc $}}

      \put(229,69){\line (1,1){16}}
      \put(245,86){$\lambda_{80}=0$}

     \put(58,48){\line (-1,1){16}}
     \put(59,48){\line (0,1){16}}
     \put(60,48){\line (-1,2){8}}
     \put(38,66){$\lambda_{9}$}
      \put(55,66){$\lambda_{11}$}
       \put(47,66){$\lambda_{10}$}

        \put(80,48){\line (1,1){16}}
     \put(81,48){\line (0,1){16}}
     \put(82,48){\line (-1,1){16}}

      \put(66,66){$\lambda_{15},$}
      \put(76,66){$\lambda_{16},$}
       \put(84,66){$\lambda_{17}=0$}
       \put(172,72){1}
       \put(160,72){0}
       \put(140,72){0}

              \put(66,72){0}
       \put(76,72){0}
       \put(86,72){0}

   \put(187,25){\line (-3,1){45}}
     \put(187,25){\line (-1,1){16}}
     \put(187,25){\line (1,1){16}}

     \put(130,43){$\lambda_{6}=0$}
     \put(165,43){$\lambda_{7}=0$}
      \put(202,43){$\lambda_{8}=1$}
       \put(202,43){\textcolor{green}{$\bigcirc $}}

       \put(140,66){$\lambda_{21}$}
     \put(160,66){$\lambda_{22}$}
      \put(172,66){$\lambda_{23}$}

      \put(165,45){\line (2,3){12}}
     \put(165,45){\line (0,1){18}}
     \put(165,45){\line (-1,1){18}}


  \put(179,19){$\lambda_{2}=1$}
  \put(179,19){\textcolor{green}{$\bigcirc $}}
  \put(81,19){$\lambda_{1}$}
      \put(80,25){\line (-1,1){16}}
     \put(81,25){\line (0,1){16}}
     \put(82,25){\line (-1,2){8}}
     \put(60,43){$\lambda_{3}$}
      \put(77,43){$\lambda_{5}$}
      \put(69,43){$\lambda_{4}$}

 \put(130,-2){$\lambda_{0}=1$}
  \put(130,0){\line (-2,1){40}}
    \put(150,2){\line (2,1){30}}
  \put(-5,90){$G_2^\bot \setminus G_{1}^\bot $:}
  \put(-5,66){$G_1^\bot \setminus G_{0}^\bot $:}
  \put(-5,43){$G_{0}^\bot \setminus G_{-1}^\bot $:}
  \put(-5,19){$G_{-1}^\bot \setminus G_{-2}^\bot $:}
  \put(-5,-2){$G_{-2}^\bot  $:}

    \end{picture}\\

\centerline{Figure 3}

 \noindent
 Using periodicity, we have
        $$ (\lambda_{231}, \lambda_{232},\lambda_{233})=(\lambda_{15}, \lambda_{16},\lambda_{17});
        $$
        $$\quad  (\lambda_{75}, \lambda_{76},\lambda_{77})=(\lambda_{21}, \lambda_{22},\lambda_{23}).
        $$
         Therefore, we choose
         $$(\lambda_{15},\lambda_{16}\,\lambda_{17})=(0,0,0) ;
         (\lambda_{21}, \lambda_{22},\lambda_{23})=(0,0,1), \lambda_1=0.$$
         We choose the values $\lambda_{27},...,\lambda_{74}$ and $\lambda_{81},...,\lambda_{230}$ based on the periodicity conditions.

     \subsection{Example 2. $N=M=2, p=3$}
      In the previous example (see Fig.3) we can take $$  (\lambda_{75}, \lambda_{76},)=(\lambda_{21}, \lambda_{22},)=(1,1);
$$ and
$$
(\lambda_{228}, \lambda_{229},\lambda_{230})=(\lambda_{12}, \lambda_{13},\lambda_{14})=(0,0,0).$$
$$(\lambda_{225}, \lambda_{226},\lambda_{227})=(\lambda_{9}, \lambda_{10},\lambda_{11})=(0,0,0);
$$
  In this case, we obtain 3 paths $$\lambda_0 \rightarrow \lambda_2 \rightarrow \lambda_8 \rightarrow \lambda_{25} \rightarrow \lambda_{77} $$
$$\lambda_0 \rightarrow \lambda_2 \rightarrow \lambda_8 \rightarrow \lambda_{25} \rightarrow \lambda_{76} $$
$$\lambda_0 \rightarrow \lambda_2 \rightarrow \lambda_8 \rightarrow \lambda_{25} \rightarrow \lambda_{75} $$
    that satisfy conditions A1) and A2) (see Fig. 4).

\unitlength=0.60mm
  \begin{picture}(240,120)
   \small
       \put(202,48){\line (-1,1){16}}
     \put(202,48){\line (0,1){16}}
     \put(202,48){\line (1,1){16}}

     \put(185,66){$\lambda_{24}$}
     \put(185,72){$0$}
     \put(198,66){$\lambda_{25}=1$}
      \put(221,66){$\lambda_{26}=0$}
      \put(197,66){\textcolor{green}{$\bigcirc $}}

      \put(200,72){\line (-1,1){12}}
     \put(200,72){\line (0,1){12}}
     \put(200,72){\line (1,1){12}}

     \put(200,92){\line (-1,1){16}}
     \put(200,92){\line (0,1){16}}
     \put(200,92){\line (1,1){16}}

      \put(120,86){$\lambda_{71}$}
      \put(133,86){$\lambda_{18}$}
     \put(142,86){$\lambda_{19}$}
      \put(151,86){$\lambda_{20}$}
     \put(170,86){$1=\lambda_{75}=$}
     \put(198,86){$\lambda_{76}=$}
      \put(213,86){$\lambda_{77}$}
      \put(223,86){$\lambda_{78}$}
            \put(235,86){$\lambda_{79}$}

      \put(213,86){\textcolor{green}{$\bigcirc $}}
      \put(197,86){\textcolor{green}{$\bigcirc $}}
      \put(180,86){\textcolor{green}{$\bigcirc $}}

      \put(176,90){\line (0,1){16}}
      \put(176,90){\line (-1,1){16}}
      \put(176,90){\line (-2,1){32}}

        \put(220,90){\line (1,1){16}}
     \put(220,90){\line (0,1){16}}
     \put(220,90){\line (2,1){32}}

      \put(180,108){$\lambda_{228}$}
     \put(194,108){$\lambda_{229}$}
      \put(208,108){$\lambda_{230}$}

       \put(138,108){$\lambda_{225}$}
     \put(154,108){$\lambda_{226}$}
      \put(168,108){$\lambda_{227}$}

      \put(220,108){$\lambda_{231}$}
     \put(237,108){$\lambda_{232}$}
      \put(254,108){$\lambda_{233}$}

      \put(225,72){\line (0,1){12}}
      \put(225,72){\line (1,1){12}}
      \put(225,72){\line (2,1){24}}

      \put(245,86){$\lambda_{80}=0$}

     \put(43,48){\line (-1,2){8}}
     \put(45,48){\line (0,1){16}}
     \put(46,48){\line (1,2){8}}

     \put(30,66){$\lambda_{9}$}
      \put(48,66){$\lambda_{11}$}
       \put(39,66){$\lambda_{10}$}

       \put(32,72){0}
      \put(51,72){0}
       \put(42,72){0}

       \put(60,66){$\lambda_{12}$}
       \put(71,66){$\lambda_{13}$}
       \put(81,66){$\lambda_{14}$}

        \put(111,48){\line (1,2){8}}
     \put(111,48){\line (0,1){16}}
     \put(111,48){\line (-1,1){16}}

      \put(30,86){$\lambda_{27}$}
      \put(39,86){$\lambda_{28}$}
       \put(48,86){$\lambda_{29}$}

      \put(60,86){$ \ . \ .\  .\ $}
       \put(75,86){$\lambda_{53}$}
       \put(87,86){$\lambda_{54}$}
       \put(96,86){$\lambda_{55 \ .\ .\  .}$}
       \put(105,86){$\lambda_{55 \ .\ .\  .}$}

      \put(93,66){$\lambda_{15},$}
      \put(103,66){$\lambda_{16},$}
       \put(113,66){$\lambda_{17}$}

        \put(124,66){$\lambda_{18}$}
      \put(134,66){$\lambda_{19}$}
       \put(144,66){$\lambda_{20}$}
        \put(124,72){0}
      \put(134,72){0}
       \put(144,72){0}

       \put(94,72){0}
      \put(104,72){0}
       \put(114,72){0}

       \put(175,72){1}
       \put(165,72){1}
       \put(155,72){1}

       \put(62,72){0}
       \put(72,72){0}
       \put(82,72){0}

       \put(168,25){\line (0,1){16}}
     \put(168,25){\line (-1,1){16}}
     \put(168,25){\line (2,1){32}}

     \put(143,45){$\lambda_{6}$}
     \put(168,43){$\lambda_{7}=0$}
      \put(198,43){$\lambda_{8}=1$}
       \put(198,43){\textcolor{green}{$\bigcirc $}}

       \put(155,66){$\lambda_{21}$}
     \put(165,66){$\lambda_{22}$}
      \put(173,66){$\lambda_{23}$}

      \put(168,45){\line (2,3){12}}
     \put(168,45){\line (0,1){18}}
     \put(168,45){\line (-2,3){12}}

\put(143,48){\line (-1,1){15}}
     \put(143,48){\line (-1,3){5}}
     \put(143,48){\line (1,2){8}}

   \put(70,25){\line (-3,2){24}}
     \put(70,25){\line (0,1){16}}
     \put(70,25){\line (2,1){38}}

     \put(43,43){$\lambda_{3}$}
      \put(110,43){$\lambda_{5}$}
      \put(69,43){$\lambda_{4}$}

     \put(165,19){$\lambda_{2}=1$}
     \put(165,19){\textcolor{green}{$\bigcirc $}}
     \put(68,19){$\lambda_{1}$}

        \put(73,48){\line (1,2){8}}
     \put(73,48){\line (0,1){16}}
     \put(73,48){\line (-1,2){8}}

 \put(130,-2){$\lambda_{0}=1$}
  \put(130,0){\line (-3,1){57}}
  \put(149,3){\line (1,1){16}}

  \put(-5,108){$G_3^\bot \setminus G_{2}^\bot $:}
  \put(-5,87){$G_2^\bot \setminus G_{1}^\bot $:}
  \put(-5,66){$G_1^\bot \setminus G_{0}^\bot $:}
  \put(-5,43){$G_{0}^\bot \setminus G_{-1}^\bot $:}
  \put(-5,19){$G_{-1}^\bot \setminus G_{-2}^\bot $:}
  \put(-5,-2){$G_{-2}^\bot  $:}
    \end{picture}\\
    \centerline{Figure 4}
   \noindent
    Due to periodicity, we have
 $$
(\lambda_{0+27j}, \lambda_{1+27j}, \ .\ .\ ,
\lambda_{26+27j})= (\lambda_{0},\lambda_{1},\ .\ .\ \lambda_{26})\  j=0,1,...,8.
$$
in particular,
$$
(\lambda_{27}, \lambda_{28},\lambda_{29} \ .\ .\ \lambda_{53})= (\lambda_{0},\lambda_{1},\ .\ .\ \lambda_{26})
$$
$$
(\lambda_{54}, \lambda_{55},\lambda_{56} \ .\ .\ \lambda_{71} \ . \ . \ . \lambda_{80})= (\lambda_{0},\lambda_{1},\ .\ .\ \lambda_{26})
$$
$$
(\lambda_{225},\lambda_{226},\lambda_{227})=\lambda_{228}, \lambda_{229},\lambda_{230})=(0,0,0)
$$
$$
(\lambda_{234}, \lambda_{235},\lambda_{236})=(\lambda_{18}, \lambda_{19},\lambda_{20})=(0,0,0)
$$
$$
(\lambda_{231}, \lambda_{232},\lambda_{233})=(\lambda_{15}, \lambda_{16},\lambda_{17})=(0,0,0);$$
$$  (\lambda_{75}, \lambda_{76},\lambda_{77})=(\lambda_{21}, \lambda_{22},\lambda_{23})=(1,1,1);
$$
$$
(\lambda_{78}, \lambda_{79},\lambda_{80})=(\lambda_{24}, \lambda_{25},\lambda_{26})=(0,1,0).
$$

Taking into account the periodicity of the mask, we obtain a tree

\unitlength=0.60mm
  \begin{picture}(240,120)
   \small
       \put(202,48){\line (-1,1){16}}
     \put(202,48){\line (0,1){16}}
     \put(202,48){\line (1,1){16}}

     \put(185,66){$\lambda_{24}$}
     \put(185,72){$0$}
     \put(198,66){$\lambda_{25}=1$}
      \put(221,66){$\lambda_{26}=0$}
      \put(197,66){\textcolor{green}{$\bigcirc $}}

      \put(200,72){\line (-1,1){12}}
     \put(200,72){\line (0,1){12}}
     \put(200,72){\line (1,1){12}}

     \put(200,92){\line (-1,1){16}}
     \put(200,92){\line (0,1){16}}
     \put(200,92){\line (1,1){16}}

      \put(120,86){$\lambda_{17}$}
      \put(133,86){$\lambda_{18}$}
     \put(142,86){$\lambda_{19}$}
      \put(151,86){$\lambda_{20}$}
     \put(170,86){$1=\lambda_{75}=$}
     \put(198,86){$\lambda_{76}=$}
      \put(213,86){$\lambda_{77}$}
      \put(223,86){$\lambda_{78}$}
            \put(235,86){$\lambda_{79}$}

      \put(213,86){\textcolor{green}{$\bigcirc $}}
      \put(197,86){\textcolor{green}{$\bigcirc $}}
      \put(180,86){\textcolor{green}{$\bigcirc $}}

      \put(176,90){\line (0,1){16}}
      \put(176,90){\line (-1,1){16}}
      \put(176,90){\line (-2,1){32}}

        \put(220,90){\line (1,1){16}}
     \put(220,90){\line (0,1){16}}
     \put(220,90){\line (2,1){32}}

      \put(180,108){$\lambda_{228}$}
     \put(194,108){$\lambda_{229}$}
      \put(208,108){$\lambda_{230}$}

       \put(138,108){$\lambda_{225}$}
     \put(154,108){$\lambda_{226}$}
      \put(168,108){$\lambda_{227}$}

      \put(220,108){$\lambda_{231}$}
     \put(237,108){$\lambda_{232}$}
      \put(254,108){$\lambda_{233}$}

      \put(225,72){\line (0,1){12}}
      \put(225,72){\line (1,1){12}}
      \put(225,72){\line (2,1){24}}

      \put(245,86){$\lambda_{80}=0$}

     \put(43,48){\line (-1,2){8}}
     \put(45,48){\line (0,1){16}}
     \put(46,48){\line (1,2){8}}

     \put(30,66){$\lambda_{9}$}
      \put(48,66){$\lambda_{11}$}
       \put(39,66){$\lambda_{10}$}

       \put(32,72){0}
      \put(51,72){0}
       \put(42,72){0}

       \put(60,66){$\lambda_{12}$}
       \put(71,66){$\lambda_{13}$}
       \put(81,66){$\lambda_{14}$}

        \put(111,48){\line (1,2){8}}
     \put(111,48){\line (0,1){16}}
     \put(111,48){\line (-1,1){16}}

      \put(30,86){$\lambda_{0}$}
      \put(39,86){$\lambda_{1}$}
       \put(48,86){$\lambda_{2}$}

            \put(60,86){$ \ . \ .\  .\ $}
       \put(75,86){$\lambda_{26}$}
       \put(87,86){$\lambda_{0}$}
       \put(96,86){$\lambda_{1 \ .\ .\  .}$}
       \put(105,86){$\lambda_{2 \ .\ .\  .}$}

      \put(93,66){$\lambda_{15},$}
      \put(103,66){$\lambda_{16},$}
       \put(113,66){$\lambda_{17}$}

        \put(124,66){$\lambda_{18}$}
      \put(134,66){$\lambda_{19}$}
       \put(144,66){$\lambda_{20}$}
        \put(124,72){0}
      \put(134,72){0}
       \put(144,72){0}

       \put(94,72){0}
      \put(104,72){0}
       \put(114,72){0}

       \put(175,72){1}
       \put(165,72){1}
       \put(155,72){1}

       \put(62,72){0}
       \put(72,72){0}
       \put(82,72){0}

   \put(168,25){\line (0,1){16}}
     \put(168,25){\line (-1,1){16}}
     \put(168,25){\line (2,1){32}}

     \put(143,45){$\lambda_{6}$}
     \put(168,43){$\lambda_{7}=0$}
      \put(198,43){$\lambda_{8}=1$}
       \put(198,43){\textcolor{green}{$\bigcirc $}}

       \put(155,66){$\lambda_{21}$}
     \put(165,66){$\lambda_{22}$}
      \put(173,66){$\lambda_{23}$}

      \put(168,45){\line (2,3){12}}
     \put(168,45){\line (0,1){18}}
     \put(168,45){\line (-2,3){12}}

\put(143,48){\line (-1,1){15}}
     \put(143,48){\line (-1,3){5}}
     \put(143,48){\line (1,2){8}}

   \put(70,25){\line (-3,2){24}}
     \put(70,25){\line (0,1){16}}
     \put(70,25){\line (2,1){38}}

     \put(43,43){$\lambda_{3}$}
      \put(110,43){$\lambda_{5}$}
      \put(69,43){$\lambda_{4}$}

     \put(165,19){$\lambda_{2}=1$}
     \put(165,19){\textcolor{green}{$\bigcirc $}}
     \put(68,19){$\lambda_{1}$}

        \put(73,48){\line (1,2){8}}
     \put(73,48){\line (0,1){16}}
     \put(73,48){\line (-1,2){8}}

 \put(130,-2){$\lambda_{0}=1$}
  \put(130,0){\line (-3,1){57}}
  \put(149,3){\line (1,1){16}}
    \put(-5,108){$G_3^\bot \setminus G_{2}^\bot $:}
  \put(-5,87){$G_2^\bot \setminus G_{1}^\bot $:}
  \put(-5,66){$G_1^\bot \setminus G_{0}^\bot $:}
  \put(-5,43){$G_{0}^\bot \setminus G_{-1}^\bot $:}
  \put(-5,19){$G_{-1}^\bot \setminus G_{-2}^\bot $:}
  \put(-5,-2){$G_{-2}^\bot  $:}
    \end{picture}\\

    \centerline{Figure 5}

Using the resulting tree with three paths, we find the Fourier transform of the scaling function

$$\hat\varphi_(\chi)=\left\{
\begin{array}{ll}
1,& \chi\in G^\bot_{-2} ,\\
\alpha_0,& \chi\in G^\bot_{-2}r_{-2}^0r_{-1}^1r_{0}^2r_1^2,\\
\alpha_1,& \chi\in G^\bot_{-2}r_{-2}^1r_{-1}^1r_{0}^2r_1^2,\\
\alpha_2,& \chi\in G^\bot_{-2}r_{-2}^2r_{-1}^1r_{0}^2r_1^2.\\
\end{array} \right.$$
We can choose $\alpha_0=\alpha_1=\alpha_2=1.$
Using the equality,
$$\int_{G_n^\bot}(\chi,x)d\nu (\chi) =p^n {\bf 1}_{G_n}(x)$$
we reconstruct the scaling function
$$
\varphi (x)=\int_G \hat\varphi_(\chi)(\chi,x)d\nu (\chi)=$$

$$=\int_{G^\bot_{-2}}(\chi,x)d\nu (\chi)
+\int_{G^\bot_{-2}r_{-2}^0 r_{-1}^1r_{0}^2r_{1}^2}(\chi,x)d\nu (\chi)+
$$
$$
\int_{G^\bot_{-2}r_{-2}^1 r_{-1}^1r_{0}^2r_{1}^2}(\chi,x)d\nu (\chi)+
\int_{G^\bot_{-2}r_{-2}^2 r_{-1}^1r_{0}^2r_{1}^2}(\chi,x)d\nu (\chi)=
$$
$$
=\frac{1}{p^2}{\bf 1}_{G_{-2}}(x)+\frac{1}{p^2}{\bf 1}_{G_{-2}}(x)(r_{-1}(x)r_0^2(x)r_1^2(x) (1+r_{-2}(x)+r_{-2}^2(x)))
$$
\subsection{Example 3. Path from node 130}
Let us form other pairwise distinct vectors (see Theorem 3.1)
  $$ (\alpha_{-1},\alpha_{0})=(1,2),\ (\alpha_{0},\alpha_{1})=(2,1),\ (\alpha_{1},\alpha_{2})=(1,1).
  $$
  Thus, we obtain the coefficients $\alpha_{-1}=1,\alpha_{0}=2,\alpha_{1}=1,\alpha_{2}=1$.
  For  $\alpha_{-2}=1$ we obtain  $n=1+3\cdot 1+3^2\cdot 2+3^3\cdot 1+3^4\cdot 1 =130$.
  We select a path from node 130
    and find a path $$((((130 \ {\rm div} \ 3) \ {\rm div}\  3) \ {\rm div} \ 3) \ {\rm div}\  3)=130 \rightarrow 43 \rightarrow 14 \rightarrow 4\rightarrow 1$$
     that generates a scaling function (see Fig. 6).
   This path is highlighted by circles. By construction, on this path we have
  $$\lambda_{129}=\lambda_{130}=\lambda_{131}=0, \lambda_{43}=\lambda_{14}=\lambda_{4}=\lambda_{1}=\lambda_{0}=1.$$

\unitlength=0.60mm
  \begin{picture}(240,120)
   \small

       \put(108,90){\line (-1,1){16}}
      \put(108,90){\line (0,1){16}}
      \put(108,90){\line (1,1){16}}

      \put(88,108){$\lambda_{129}$}
      \put(103,108){$\lambda_{130}$}
      \put(118,108){$\lambda_{131}$}

       \put(87,66){$\lambda_{12}$}
       \put(98,66){$\lambda_{13}$}
       \put(109,66){$\lambda_{14}$}

       \put(111,70){\line (0,1){16}}
        \put(111,70){\line (-1,1){16}}
         \put(111,70){\line (1,1){16}}
         \put(110,66){\textcolor{green}{$\bigcirc $}}

             \put(60,86){$ \ . \ .\  .\ $}
       \put(107,86){$\lambda_{43}$}
       \put(122,86){$\lambda_{44}=0$}
        \put(107,86){\textcolor{green}{$\bigcirc $}}
       \put(80,86){$0=\lambda_{42 \ .\ .\  .}$}

              \put(88,72){0}
       \put(98,72){0}

   \put(100,25){\line (-1,1){16}}
     \put(100,25){\line (0,1){27}}
     \put(100,25){\line (1,1){16}}

     \put(73,43){$\lambda_{3}=0$}
      \put(112,43){$\lambda_{5}=0$}
      \put(98,43){$\lambda_{4}$}
      \put(98,43){\textcolor{green}{$\bigcirc $}}

     \put(125,19){$\lambda_{2}=0$}
     \put(98,19){\textcolor{green}{$\bigcirc $}}
     \put(98,-2){\textcolor{green}{$\bigcirc $}}
     \put(98,19){$\lambda_{1}=1$}

        \put(100,48){\line (1,2){8}}
     \put(100,48){\line (0,1){16}}
     \put(100,48){\line (-1,2){8}}

 \put(98,-2){$\lambda_{0}=1$}

  \put(100,3){\line (0,1){15}}
  \put(100,3){\line (2,1){30}}
    \put(-5,108){$G_3^\bot \setminus G_{2}^\bot $:}
  \put(-5,87){$G_2^\bot \setminus G_{1}^\bot $:}
  \put(-5,66){$G_1^\bot \setminus G_{0}^\bot $:}
  \put(-5,43){$G_{0}^\bot \setminus G_{-1}^\bot $:}
  \put(-5,19){$G_{-1}^\bot \setminus G_{-2}^\bot $:}
  \put(-5,-2){$G_{-2}^\bot  $:}
    \end{picture}\\

\centerline{Figure 6}

Taking into account the periodicity of the mask, we obtain a tree

\unitlength=0.60mm
  \begin{picture}(240,120)
   \small
     \put(202,48){\line (-1,2){8}}
     \put(202,48){\line (0,1){16}}
     \put(202,48){\line (1,1){16}}

     \put(230,66){$\lambda_{24}$}
     \put(242,66){$\lambda_{25}$}
      \put(256,66){$\lambda_{26}$}
       \put(230,71){0}
     \put(242,71){0}
      \put(256,71){0}

      \put(133,86){$\lambda_{45}$}
     \put(142,86){$\lambda_{46}$}
      \put(151,86){$\lambda_{47...}$}

      \put(165,86){$\lambda_{51}$}
     \put(175,86){$\lambda_{52}$}
     \put(185,86){$\lambda_{53\ .}$}

      \put(200,86){$\lambda_{54}$}
      \put(210,86){$\lambda_{55\ .\ .\ .}$}
          \put(235,86){$\lambda_{78}$}
      \put(245,86){$\lambda_{79}$}
      \put(255,86){$\lambda_{80}$}

      \put(98,90){\line (-3,1){48}}
      \put(98,90){\line (-1,1){16}}
      \put(98,90){\line (-2,1){32}}

       \put(108,90){\line (-1,1){16}}
      \put(108,90){\line (0,1){16}}
      \put(108,90){\line (1,1){16}}
      \put(88,108){$\lambda_{129}$}
      \put(103,108){$\lambda_{130}$}
      \put(118,108){$\lambda_{131}$}

        \put(125,90){\line (2,1){32}}
     \put(125,90){\line (1,1){16}}
     \put(125,90){\line (3,1){48}}

       \put(43,108){$\lambda_{126}$}
     \put(58,108){$\lambda_{127}$}
      \put(75,108){$\lambda_{128}$}

      \put(136,108){$\lambda_{132}$}
     \put(152,108){$\lambda_{133}$}
      \put(170,108){$\lambda_{134}$}

     \put(43,48){\line (-1,2){8}}
     \put(45,48){\line (0,1){16}}
     \put(46,48){\line (1,2){8}}

     \put(30,66){$\lambda_{9}$}
      \put(48,66){$\lambda_{11}$}
       \put(39,66){$\lambda_{10}$}

       \put(32,72){0}
      \put(51,72){0}
       \put(42,72){0}

       \put(87,66){$\lambda_{12}$}
       \put(98,66){$\lambda_{13}$}
       \put(109,66){$\lambda_{14}$}

       \put(111,70){\line (0,1){16}}
        \put(111,70){\line (-1,2){8}}
         \put(111,70){\line (1,2){8}}
         \put(110,66){\textcolor{green}{$\bigcirc $}}

        \put(141,48){\line (1,2){8}}
     \put(141,48){\line (0,1){16}}
     \put(141,48){\line (-1,1){16}}

      \put(30,86){$\lambda_{27}$}
      \put(39,86){$\lambda_{28}$}
       \put(48,86){$\lambda_{29}$}

            \put(58,86){$. .. \ $}
       \put(107,86){$\lambda_{43}$}
       \put(120,86){$\lambda_{44}$}
        \put(107,86){\textcolor{green}{$\bigcirc $}}
       \put(96,86){$\lambda_{42 }$}

 \put(65,86){$\lambda_{39}$}
       \put(75,86){$\lambda_{40}$}
       \put(85,86){$\lambda_{41}$}

      \put(120,66){$\lambda_{15},$}
      \put(132,66){$\lambda_{16},$}
       \put(144,66){$\lambda_{17}$}

      \put(120,72){1}
      \put(132,72){1}
       \put(144,72){1}

        \put(154,66){$\lambda_{18}$}
      \put(164,66){$\lambda_{19}$}
       \put(174,66){$\lambda_{20}$}
        \put(154,72){0}
      \put(164,72){0}
       \put(174,72){0}

       \put(190,72){0}
       \put(202,72){0}
       \put(215,72){0}

             \put(88,72){0}
       \put(98,72){0}
         \put(168,25){\line (0,1){16}}
     \put(168,25){\line (4,1){62}}
     \put(168,25){\line (2,1){32}}

     \put(228,45){$\lambda_{8}$}
     \put(168,43){$\lambda_{6}$}
      \put(198,43){$\lambda_{7}$}

       \put(189,66){$\lambda_{21}$}
     \put(200,66){$\lambda_{22}$}
      \put(215,66){$\lambda_{23}$}

      \put(168,45){\line (2,3){12}}
     \put(168,45){\line (0,1){18}}
     \put(168,45){\line (-2,3){12}}

     \put(233,48){\line (2,1){30}}
     \put(233,48){\line (0,1){20}}
     \put(233,48){\line (1,1){18}}

   \put(70,25){\line (-3,2){24}}
     \put(70,25){\line (3,2){27}}
     \put(70,25){\line (4,1){68}}

     \put(43,43){$\lambda_{3}=0$}
      \put(138,43){$\lambda_{5}=0$}
      \put(98,43){$\lambda_{4}=1$}
      \put(98,43){\textcolor{green}{$\bigcirc $}}

     \put(165,19){$\lambda_{2}=0$}
     \put(68,19){\textcolor{green}{$\bigcirc $}}
     \put(68,19){$\lambda_{1}=1$}

        \put(100,48){\line (1,2){8}}
     \put(100,48){\line (0,1){16}}
     \put(100,48){\line (-1,2){8}}

 \put(130,-2){$\lambda_{0}=1$}
  \put(130,0){\line (-3,1){57}}
  \put(149,3){\line (1,1){16}}
    \put(-5,108){$G_3^\bot \setminus G_{2}^\bot $:}
  \put(-5,87){$G_2^\bot \setminus G_{1}^\bot $:}
  \put(-5,66){$G_1^\bot \setminus G_{0}^\bot $:}
  \put(-5,43){$G_{0}^\bot \setminus G_{-1}^\bot $:}
  \put(-5,19){$G_{-1}^\bot \setminus G_{-2}^\bot $:}
  \put(-5,-2){$G_{-2}^\bot  $:}
    \end{picture}\\

\centerline{Figure 7}

    It is clear that
    $$
    (\lambda_{27},\lambda_{28},...,\lambda_{53})= (\lambda_{54},\lambda_{55},...,\lambda_{80})=(\lambda_{0},\lambda_{1},...,\lambda_{26})
    $$
    and $m_0(\xi)m_0(\xi\mathcal{A}^{-1})m_0(\xi\mathcal{A}^{-2})=0$ on $G_3^\bot\setminus G_2^\bot$.
Using the resulting tree, we find the Fourier transform of the scaling function
$$\hat\varphi_(\chi)= {\bf 1}_{G_{-2}^\bot} (\xi) + {\bf 1}_{G_{-2}^\bot r_{-2}} (\xi) +
 {\bf 1}_{G_{-2}^\bot r_{-2}r_{-1}} (\xi)+
 {\bf 1}_{G_{-2}^\bot r_{-2}^2r_{-1}r_0} (\xi)+
 {\bf 1}_{G_{-1}^\bot r_{-1}^2r_0} (\xi).
$$

Using the equality,
$$\int_{G_n^\bot r_n^{\alpha_n} r_{n+1}^{\alpha_{n+1}}  ... r_{n+s}^{\alpha_{n+s}}}(\chi,x)d\nu (\chi) =p^n r_n^{\alpha_n} r_{n+1}^{\alpha_{n+1}}  ... r_{n+s}^{\alpha_{n+s}}{\bf 1}_{G_n}(x)$$
we reconstruct the scaling function.

$$
\varphi (x)=\int_G \hat\varphi_(\chi)(\chi,x)d\nu (\chi)=$$

$$=\int_{G^\bot_{-2}}(\chi,x)d\nu (\chi)
+\int_{G^\bot_{-2}r_{-2}}(\chi,x)d\nu (\chi)+
$$
$$
\int_{G^\bot_{-2}r_{-2} r_{-1}}(\chi,x)d\nu (\chi)+
\int_{G^\bot_{-2}r_{-2}^2 r_{-1}r_{0}}(\chi,x)d\nu (\chi)+
\int_{G^\bot_{-1}r_{-1}^2 r_{0}}(\chi,x)d\nu (\chi)=
$$
$$
=\frac{1}{3^2}{\bf 1}_{G_{-2}}(x)(1+ r_{-2}(x)
+r_{-2}(x)r_{-1}(x)+
r_{-2}^2(x)r_{-1}(x)r_0(x))+
$$
$$
+\frac{1}{3^2}r_{-1}^2(x)r_0(x){\bf 1}_{G_{-1}}(x).
$$

\end{document}